\newcommand{\Q}{\mathbb{Q}}
\newcommand{\R}{\mathbb{R}}
\newcommand{\C}{\mathbb{C}}
\newcommand{\N}{\mathbb{N}}
\newcommand*{\rom}[1]{\expandafter\@slowromancap\romannumeral #1@}
\newtheorem{prop}{Proposition}
\newtheorem{thm}[prop]{Theorem}
\newtheorem{lemme}[prop]{Lemma}
\newtheorem{cor}[prop]{Corollary}
\theoremstyle{definition}
\theoremstyle{remark}
\numberwithin{equation}{section}
\title[higher torsion invariants for flat vector bundles with finite holonomy]{A remark on the higher torsion invariants \\ for flat vector bundles with finite holonomy}
\date{\today}
\author{Lie FU}
\address{Institute for Mathematics,
Astrophysics and Particle Physics,
Radboud University,
Heyendaalseweg 135, 6525 AJ,
Nijmegen, Netherland}
\email{lie.fu@math.ru.nl}
\author{Yeping ZHANG}
\address{School of Mathematics,
Korea Institute for Advanced Study,
Hoegiro 85, Dongdaemungu,
Seoul 02455, Korea}
\email{ypzhang@kias.re.kr}
\begin{document}

\begin{abstract}
We show that the Igusa-Klein topological torsion and the Bismut-Lott analytic torsion are equivalent
for any flat vector bundle whose holonomy is a finite subgroup of $\mathrm{GL}_n(\Q)$.
Our proof uses Artin's induction theorem in representation theory to reduce the problem to the special case of trivial flat line bundles,
which is a recent result of Puchol, Zhu and the second author.
The idea of using Artin's induction theorem appeared in a paper of Ohrt on the same topic,
of which our present work is an improvement. \\
Keywords: analytic torsion, Reidemeister-Franz torsion. \\
MSC classification: 58J52, 57Q10.
\end{abstract}

\maketitle

\tableofcontents

\section{Introduction}

The theory of topological torsion was developed by
Franz \cite{fr}, Reidemeister \cite{rei}, de Rham \cite{dr}, Milnor \cite{mil}, Whitehead \cite{wh} and many others.
The analytic torsion,
which is an analogue of the topological torsion,
was defined by Ray and Singer \cite{rs}.

Cheeger \cite{c-cm} and M{\"u}ller \cite{m-cm} independently proved that the topological torsion and the analytic torsion coincide for unitarily flat vector bundles.
This result is now known as the Cheeger-M{\"u}ller theorem.
Bismut, Zhang and M{\"u}ller simultaneously considered the extension of the Cheeger-M{\"u}ller theorem.
M{\"u}ller \cite{m-cm-2} extended the theorem to the unimodular case.
Bismut and Zhang \cite{bz} extended the theorem to the general case.
There are also various extensions to equivariant cases \cite{bz2,lr,lu}.

Wagoner \cite{wa} conjectured the existence of higher topological/analytic torsion invariants.
The conjectured invariant should be an invariant for pairs $(M \rightarrow S, F)$,
where $M\rightarrow S$ is a smooth fibration with compact fiber and $F$ is a flat vector bundle over $M$.
Bismut and Lott \cite{bl} confirmed the analytic side of Wagoner's conjecture
by constructing the so-called \textit{Bismut-Lott analytic torsion}.
Igusa \cite{ig} confirmed the topological side of Wagoner's conjecture
by constructing the so-called \textit{Igusa-Klein topological torsion}.
Goette, Igusa and Williams \cite{g-i-w,g-i} used Igusa-Klein topological torsion to detect the exotic smooth structure of fiber bundles.
Dwyer, Weiss and Williams \cite{dww} constructed another topological torsion.
The relation among these higher torsion invariants (in the most general case) is still unknown.

Bismut and Goette \cite{bg} showed that the Bismut-Lott torsion and the Igusa-Klein torsion are equivalent
if there exists a fiberwise Morse function $f: M \rightarrow \R$ satisfying the Morse-Smale transversality \cite{sm}.
In fact, Bismut and Goette extended the Bismut-Lott torsion to the equivariant case and proved their result in the equivariant context.
Goette \cite{g-a1,g-a2} extended the results in \cite{bg} to arbitrary fiberwise Morse functions.
There are also related works in \cite{bg2,bu}.
We refer to the survey by Goette \cite{g} for an overview on higher torsion invariants.
Goette also proposed a program extending the argument in \cite{g-a1,g-a2}
to functions with both non-degenerate critical points and birth-death critical points.

Igusa \cite{ig2} axiomatized higher torsion invariants for trivial flat line bundles.
He showed that any invariant satisfying the additivity axiom and the transfer axiom is essentially the Igusa-Klein torsion.
Badzioch, Dorabiala, Klein and Williams \cite{bdkw} showed that the Dwyer-Weiss-Williams torsion satisfies Igusa's axioms.
Using the results in \cite{ma} and \cite{pzz2},
Puchol, Zhu and the second author \cite{pzz3} showed that the Bismut-Lott torsion satisfies Igusa's axioms.
As a result, all these higher torsion invariants are equivalent for trivial flat line bundles.

As for arbitrary flat vector bundles, Ohrt \cite{ohrt} proposed a similar axiomatization approach for higher torsion invariants.
Under the assumption that the fibrations under consideration have simple fibers,
he showed that any invariant satisfying his axioms is essentially the Igusa-Klein torsion.
Puchol, Zhu and the second author \cite{pzz3} showed that the Bismut-Lott torsion also satisfies Ohrt's axioms.

The purpose of this paper is to explore the relation between the Igusa-Klein torsion and the Bismut-Lott torsion without restrictions on the fibrations.
Instead, we need to assume that the holonomy of the flat vector bundle in question lies in $\mathrm{GL}_n(\Q)$.
Our result is related to the transfer index conjecture proposed by Bunke and Gepner \cite{buge}.

\vspace{5mm}

\noindent\textbf{Acknowledgments.}
The authors are grateful to Zicheng Qian who brought Artin's induction theorem to our attention.

L. Fu is partially supported by the Agence Nationale de la Recherche (ANR) under projects ANR-20-CE40-0023 and ANR-16-CE40-0011.
Y. Zhang is supported by KIAS individual Grant MG077401 at Korea Institute for Advanced Study.

\section{Main result}

Let $M \rightarrow S$ be a smooth fibration.
Let $Z$ be the fiber.
Let $F$ be a flat vector bundle over $M$.
We assume that
\begin{itemize}
\item[-] $\pi_1(S)$ is finite;
\item[-] $Z$ is closed and oriented;
\item[-] the holonomy group of $F$ is finite.
\end{itemize}
These assumptions appear in \cite{ohrt}.
For $M \rightarrow S$ and $F$ as above,
we denote by
\begin{equation}
\tau^\mathrm{BL}(M/S,F) \in H^{\mathrm{even} \geqslant 2}(S)
\end{equation}
its Bismut-Lott analytic torsion class \cite{bl} (cf. \cite[Definition 2.1]{pzz3}),
and denote by
\begin{equation}
\tau^\mathrm{IK}(M/S,F) \in H^{\mathrm{even} \geqslant 2}(S)
\end{equation}
its Igusa-Klein topological torsion class \cite{ig}.
For $k\in\N$ and a class $a \in H^\bullet(S)$,
let $a^{[k]} \in H^k(S)$ be its component of degree $k$.
Set
\begin{align}
\label{eq-normalize}
\begin{split}
\tau^\mathrm{an}(M/S,F) & =
\sum_k \bigg\{ \frac{2^{2k}\big(k!\big)^2}{(2k+1)!} \tau^\mathrm{BL}(M/S,F) \bigg\}^{[2k]} \;,\\
\tau^\mathrm{top}(M/S,F) & =
\sum_k \bigg\{ \! - \frac{k!}{(2\pi)^k} \tau^\mathrm{IK}(M/S,F) + \frac{\zeta'(-k)\mathrm{rk}F}{2} \int_Z \mathrm{e}(TZ)\mathrm{ch}(TZ) \bigg\}^{[2k]} \;,
\end{split}
\end{align}
where $\int_Z: H^\bullet(M) \rightarrow H^\bullet(S)$ is the integration along the fiber,
$\mathrm{e}(TZ)$ (resp. $\mathrm{ch}(TZ)$) is the Euler class of the relative tangent bundle $TZ$ (resp. the Chern character of $TZ\otimes_\R\C$) ,
and $\zeta$ is the Riemann zeta function.
The first identity in \eqref{eq-normalize} is the Chern normalization introduced by Bismut and Goette \cite[Defintion 2.37]{bg}.

Let $\pi_1(M)$ be the fundamental group of $M$.
Let $\widetilde{M}$ be the universal cover of $M$,
which is canonically equipped with a right group action of $\pi_1(M)$.
For a group homomorphism $\rho: \pi_1(M) \rightarrow \mathrm{GL}_n(\C)$ with finite image,
set
\begin{equation}
F_\rho = \widetilde{M} \times_\rho \C^n \;,
\end{equation}
which is a flat vector bundle over $M$ with finite holonomy.
For convenience,
we denote
\begin{equation}
\tau^\mathrm{an/top}(M/S,\rho) = \tau^\mathrm{an/top}(M/S,F_\rho) \;.
\end{equation}

For a finite Galois extension $K/\Q$,
we denote by $\mathrm{Gal}(K/\Q)$ its Galois group.
For $g\in \mathrm{Gal}(K/\Q)$ and a group homomorphism $\rho: \pi_1(M) \rightarrow \mathrm{GL}_n(K)$,
we define
\begin{align}
\begin{split}
g.\rho: \pi_1(M) & \rightarrow \mathrm{GL}_n(K) \\
\gamma & \mapsto \big(g\big({\rho(\gamma)}_{i,j}\big)\big)_{1\leqslant i,j\leqslant n} \;,
\end{split}
\end{align}
where ${\rho(\gamma)}_{i,j}\in K$ are the entries of the matrix $\rho(\gamma) \in \mathrm{GL}_n(K)$.

\begin{thm}
\label{thm-main}
For a smooth manifold $S$ with finite fundamental group,
a smooth fibration $M \rightarrow S$ with closed oriented fiber,
a finite Galois extension $K/\Q$ and a group homomorphism $\rho: \pi_1(M) \rightarrow \mathrm{GL}_n(K)$ with finite image,
we have
\begin{equation}
\label{eq-thm-main}
\sum_{g\in\mathrm{Gal}(K/\Q)} \tau^\mathrm{an}(M/S,g.\rho) = \sum_{g\in\mathrm{Gal}(K/\Q)} \tau^\mathrm{top}(M/S,g.\rho) \;.
\end{equation}
In particular,
for a homomorphism $\rho: \pi_1(M) \rightarrow \mathrm{GL}_n(\Q)$ with finite image,
we have
\begin{equation}
\tau^\mathrm{an}(M/S,\rho) = \tau^\mathrm{top}(M/S,\rho) \;.
\end{equation}
\end{thm}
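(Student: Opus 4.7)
My strategy is to combine Artin's induction theorem in representation theory with the transfer (induction) and additivity properties of both torsions so as to reduce the identity \eqref{eq-thm-main} to the case of the trivial flat line bundle on a suitable finite cover of $M$, for which the equality is the result of Puchol, Zhu and the second author \cite{pzz3}.

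First, since $\rho$ has finite image, it factors through the finite group $G = \rho(\pi_1(M)) \subseteq \mathrm{GL}_n(K)$. The character $\chi := \sum_{g \in \mathrm{Gal}(K/\Q)} \chi_{g.\rho}$ of $G$ is invariant under the natural $\mathrm{Gal}(K/\Q)$-action on characters, so it takes values in $\Q$. Artin's induction theorem then guarantees a positive integer $N$, integers $a_i$, and cyclic subgroups $C_i \leqslant G$ such that
\begin{equation}
N \cdot \chi = \sum_i a_i \, \mathrm{Ind}_{C_i}^G \mathbf{1}_{C_i}
\end{equation}
as virtual characters of $G$, where $\mathbf{1}_{C_i}$ denotes the trivial character of $C_i$.

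Next, using the additivity of $\tau^\mathrm{BL}$ and $\tau^\mathrm{IK}$ under direct sums (and the fact that a flat bundle is determined by its holonomy character), both $N\sum_g \tau^\mathrm{an}(M/S, g.\rho)$ and $N\sum_g \tau^\mathrm{top}(M/S, g.\rho)$ are $\Z$-linear combinations $\sum_i a_i \tau^\mathrm{an/top}(M/S, F_{\mathrm{Ind}_{C_i}^G \mathbf{1}_{C_i}})$. For each $i$, let $\pi_i: M_i \to M$ be the finite étale cover corresponding to the subgroup $\rho^{-1}(C_i) \leqslant \pi_1(M)$; it has degree $[G:C_i]$, and $\pi_{i*}\mathbf{1}_{M_i}$ is canonically identified with $F_{\mathrm{Ind}_{C_i}^G \mathbf{1}_{C_i}}$. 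The transfer property, built into Igusa's axioms \cite{ig2} on the topological side and established analytically by Ma \cite{ma} (as exploited in \cite{pzz3}) on the analytic side, then yields
\begin{equation}
\tau^\mathrm{an/top}\big(M/S, F_{\mathrm{Ind}_{C_i}^G \mathbf{1}_{C_i}}\big) = \tau^\mathrm{an/top}\big(M_i/S, \mathbf{1}\big) \;,
\end{equation}
where the right-hand side refers to the composite fibration $M_i \to M \to S$.

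Finally, the fibration $M_i \to S$ inherits all the hypotheses of Theorem \ref{thm-main}: $\pi_1(S)$ is still finite, and the fiber of $M_i \to S$ is a finite cover of $Z$, hence closed and oriented. Therefore the result of \cite{pzz3} applies to $(M_i \to S, \mathbf{1})$ and gives $\tau^\mathrm{an}(M_i/S, \mathbf{1}) = \tau^\mathrm{top}(M_i/S, \mathbf{1})$. Taking the $\Z$-linear combination with coefficients $a_i$ and dividing by $N$ gives \eqref{eq-thm-main}. The second assertion of the theorem is then immediate: when $\rho$ is $\Q$-valued, each $g \in \mathrm{Gal}(K/\Q)$ fixes $\rho$, so both sides of \eqref{eq-thm-main} equal $|\mathrm{Gal}(K/\Q)|$ times $\tau^\mathrm{an/top}(M/S, \rho)$. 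The main technical obstacle I anticipate is verifying that the transfer identity holds for the normalized invariants $\tau^\mathrm{an}$ and $\tau^\mathrm{top}$ of \eqref{eq-normalize} rather than for the bare $\tau^\mathrm{BL}$ and $\tau^\mathrm{IK}$; this reduces to the projection-formula identity $\int_{Z_i} \mathrm{e}(TZ_i)\mathrm{ch}(TZ_i) = [G:C_i] \int_Z \mathrm{e}(TZ)\mathrm{ch}(TZ)$, which matches the rank $[G:C_i]$ of the induced bundle. A minor point is passing from virtual to honest representations so that additivity applies, which is handled by moving the negative $a_i$'s across the equality before forming the flat bundles.
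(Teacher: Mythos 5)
Your proposal is correct and follows essentially the same route as the paper: express the Galois-averaged (hence rational-valued) virtual character as a $\Q$-linear combination of representations induced from trivial ones, then use additivity and the transfer/induction formulas for both torsions to reduce to the trivial-coefficient case settled by Puchol--Zhu--Zhang. The only cosmetic difference is that you invoke the rational-character form of Artin's induction theorem (induction of trivial characters from cyclic subgroups) as a black box, whereas the paper derives the corresponding spanning statement (its Lemma \ref{lem1}) from the weaker form of Artin's theorem by induction on $|G|$; your check that the normalization terms match via $\int_{Z_i}\mathrm{e}(TZ_i)\mathrm{ch}(TZ_i)=[G:C_i]\int_Z\mathrm{e}(TZ)\mathrm{ch}(TZ)$ is exactly the right point to verify.
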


\section{A consequence of Artin's induction theorem}

Let $G$ be a finite group.
Let $R(G)$ be its representation ring with rational coefficients. In other words, as a $\Q$-vector space,
\begin{equation}
R(G) = \bigoplus_{\rho\in \operatorname{Irr}(G)} \Q \, \rho \;,
\end{equation}
where $\operatorname{Irr}(G)$ is the set of isomorphism classes of irreducible (complex) representations of $G$. The ring structure of $R(G)$ is given by tensor products.

For $\rho\in R(G)$,
let $\chi_\rho: G \rightarrow \C$ be its character.
We denote
\begin{equation}
R_\mathrm{rat}(G) = \big\{ \rho \in R(G) \;:\; \chi_\rho(g) \in \Q \text{ for any } g \in G \big\} \;.
\end{equation}

For any subgroup $H \leq G$,
let $\mathbb{1} \in R(H)$ be the one-dimensional trivial representation,
let $\mathrm{Ind}^G_H\mathbb{1} \in R(G)$ be the induced representation.
Clearly, we have
\begin{equation}
\mathrm{Ind}^G_H\mathbb{1} \in R_\mathrm{rat}(G) \;.
\end{equation}

\begin{lemme}
\label{lem1}
The vector space $R_\mathrm{rat}(G)$ is spanned by $\big(\mathrm{Ind}^G_H\mathbb{1}\big)_{H \leq G}$.
\end{lemme}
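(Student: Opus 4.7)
The plan is to reduce the claim to the cyclic case via Artin's induction theorem combined with a Galois averaging trick. The easy inclusion is that each $\mathrm{Ind}^G_H\mathbb{1}$ is the character of the permutation representation on $G/H$, whose values count fixed cosets and are therefore non-negative integers; so $\mathrm{Ind}^G_H\mathbb{1}\in R_\mathrm{rat}(G)$. What remains is to show that every $\rho\in R_\mathrm{rat}(G)$ is a $\Q$-linear combination of the $\mathrm{Ind}^G_H\mathbb{1}$.

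For this, I would first apply Artin's induction theorem in its rational form: any $\rho\in R(G)$ can be written as
\begin{equation*}
\rho \;=\; \sum_i a_i\,\mathrm{Ind}^G_{C_i}\psi_i
\end{equation*}
with $C_i\leq G$ cyclic, $\psi_i$ an irreducible character of $C_i$, and $a_i\in\Q$. Assuming $\rho\in R_\mathrm{rat}(G)$, set $\Gamma:=\mathrm{Gal}(\Q(\zeta_{|G|})/\Q)$; then $\rho$ is $\Gamma$-invariant, so averaging the identity above over $\Gamma$ and using that induction commutes with the Galois action gives
\begin{equation*}
\rho \;=\; \sum_i \frac{a_i}{|\Gamma|}\,\mathrm{Ind}^G_{C_i}\Big(\sum_{\sigma\in\Gamma}\sigma.\psi_i\Big),
\end{equation*}
in which each inner sum is a $\Gamma$-invariant, hence $\Q$-valued, virtual character of $C_i$, i.e.\ an element of $R_\mathrm{rat}(C_i)$.

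The problem is thereby reduced to the cyclic case: for cyclic $C$, show that $R_\mathrm{rat}(C)$ is spanned by $(\mathrm{Ind}^C_H\mathbb{1})_{H\leq C}$. I would handle this by matching dimensions and checking linear independence. For $C\cong\Z/n\Z$, the Galois group $\Gamma$ acts on the irreducible characters (naturally identified with $\widehat{C}\cong\Z/n\Z$) via $\chi\mapsto\chi^k$, $k\in(\Z/n)^\times$; the orbits are parameterized by the order of $\chi$, hence by divisors of $n$. Thus $\dim_\Q R_\mathrm{rat}(C)=\tau(n)$, exactly the number of subgroups of $C$. On the other hand, writing $H_d$ for the subgroup of order $d$, the character $\mathrm{Ind}^C_{H_d}\mathbb{1}$ is supported on $H_d$ with value $n/d$; evaluating any linear relation $\sum_{d\mid n}c_d\,\mathrm{Ind}^C_{H_d}\mathbb{1}=0$ at elements of each order $m$ and inducting down from $m=n$ forces all $c_d=0$, giving linear independence. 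Together these two observations show that the $\tau(n)$ characters $\mathrm{Ind}^C_{H_d}\mathbb{1}$ form a basis of $R_\mathrm{rat}(C)$.

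Combining the two steps via transitivity of induction, $\mathrm{Ind}^G_{C_i}\circ\mathrm{Ind}^{C_i}_H=\mathrm{Ind}^G_H$, expresses $\rho$ as the desired $\Q$-linear combination. I do not foresee a serious obstacle in this plan; the one conceptual point worth emphasizing is the Galois averaging step, which is precisely what converts Artin's theorem (formulated without any rationality constraint on the constituent $\psi_i$) into a statement that respects $R_\mathrm{rat}$.
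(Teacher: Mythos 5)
Your proposal is correct. It shares with the paper the central idea of combining Artin's induction theorem with Galois averaging (and both need the easy observation that $\mathrm{Ind}^G_H\mathbb{1}$ is a rational-valued permutation character, and the transitivity $\mathrm{Ind}^G_{C}\circ\mathrm{Ind}^{C}_H=\mathrm{Ind}^G_H$), but the way you settle the cyclic case is genuinely different. The paper runs an induction on $|G|$ with three cases: non-cyclic groups are handled by Artin plus averaging exactly as you do (there the cyclic subgroups are proper, so the inductive hypothesis applies); cyclic groups that split as $G'\times G''$ are handled by a K\"unneth-type claim $R_\mathrm{rat}(G\times G')=R_\mathrm{rat}(G)\otimes R_\mathrm{rat}(G')$; and the crux is the cyclic $p$-power case, where the Galois orbit sum of a faithful character is computed explicitly as $\frac{1}{p^{r-1}(p-1)}\bigl(\mathrm{Ind}^G_{\{e\}}\mathbb{1}-\mathrm{Ind}^G_{\langle a^{p^{r-1}}\rangle}\mathbb{1}\bigr)$, while non-faithful characters are pulled back from proper quotients. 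You instead dispose of an arbitrary cyclic group $C\cong\Z/n\Z$ in one stroke by a dimension count: the Galois orbits on $\widehat{C}$ are indexed by the divisors of $n$, so $\dim_\Q R_\mathrm{rat}(C)=\tau(n)$, which matches the number of subgroups, and the characters $\mathrm{Ind}^C_{H_d}\mathbb{1}=(n/d)\,\mathbb{1}_{H_d}$ are visibly linearly independent by evaluating at elements of maximal order with nonzero coefficient. This removes the need for any induction on group order, for the prime-power reduction, and for the tensor-product claim; what it costs is only the (standard) identification of $R_\mathrm{rat}$ with the $\Gamma$-invariants of $R(G)$, which you use to count orbits, whereas the paper's route produces explicit coefficients expressing a given Galois orbit sum in terms of the $\mathrm{Ind}^G_H\mathbb{1}$. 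Both arguments are complete; yours is arguably the more economical packaging of the same underlying facts.
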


The lemma above is a consequence of \cite[Exercise 13.8]{serre}.
We still give a proof for the sake of completeness.

\begin{proof}
Let $\mathcal{S}(G) \subseteq R(G)$ be the vector subspace spanned by $\big(\mathrm{Ind}^G_H\mathbb{1}\big)_{H \leq G}$.

\vspace{1mm}

\noindent Claim 1.
We have
\begin{equation}
\mathrm{Ind}_H^G \mathcal{S}(H) \subseteq \mathcal{S}(G) \hspace{2.5mm} \text{for any } H \leq G \;.
\end{equation}
This is an immediate consequence of the identity $\mathrm{Ind}_H^G\mathrm{Ind}_J^H = \mathrm{Ind}_J^G$ for $J \leq H \leq G$.

\vspace{1mm}

\noindent Claim 2.
For a surjective homomorphism $f: G \rightarrow G'$,
we have
\begin{equation}
f^*\mathcal{S}(G') \subseteq \mathcal{S}(G) \;,
\end{equation}
where $f^*: R(G') \rightarrow R(G)$ is defined by $f^*\rho = \rho \circ f$.
This is an immediate consequence of the identity $f^*\mathrm{Ind}_{H'}^{G'} = \mathrm{Ind}_H^G $ for any $H' \leq G'$ and $H = f^{-1}(H')$.

\vspace{1mm}

\noindent Claim 3.
For finite groups $G$ and $G'$,
we have
\begin{align}
\begin{split}
& R(G\times G') = R(G) \otimes R(G') \;,\\
& R_\mathrm{rat}(G\times G') = R_\mathrm{rat}(G) \otimes R_\mathrm{rat}(G') \;,\hspace{5mm}
\mathcal{S}(G\times G') \supseteq \mathcal{S}(G) \otimes \mathcal{S}(G') \;.
\end{split}
\end{align}

\vspace{1mm}

Now we are ready to prove the lemma by induction.
If $|G| = 1$, the lemma obviously holds.
Assume that
\begin{equation}
\label{eqh-pf-lem1}
\mathcal{S}(H) = R_\mathrm{rat}(H) \hspace{2.5mm} \text{for any finite group } H \text{ with } |H| < N \;.
\end{equation}
We consider a finite group $G$ with $|G| = N$.
We need to show that $R_\mathrm{rat}(G) \subseteq \mathcal{S}(G)$.
Let $K/\Q$ be a finite Galois extension such that all the representations of all the groups of order $\leqslant N$ may take values in $\mathrm{GL}_n(K)$.
There are three cases.

\vspace{1mm}

\noindent Case 1. The group $G$ is not cyclic.

Let $\rho \in R_\mathrm{rat}(G)$.
We obviously have
\begin{equation}
\label{eq1-pf-lem1}
\rho = \frac{1}{[K:\Q]} \sum_{g\in\mathrm{Gal}(K/\Q)} g.\rho \;.
\end{equation}
On the other hand,
by Artin's induction theorem,
there exist cyclic subgroups $H_1,\cdots,H_m \leq G$ and $\big(\varphi_k\in R(H_k)\big)_{k=1,\cdots,m}$ such that
\begin{equation}
\label{eq2-pf-lem1}
\rho = \sum_{k=1}^m \mathrm{Ind}_{H_k}^G \varphi_k \;.
\end{equation}
By \eqref{eq1-pf-lem1} and \eqref{eq2-pf-lem1},
we have
\begin{equation}
\label{eq3-pf-lem1}
\rho = \frac{1}{[K:\Q]} \sum_{k=1}^m \mathrm{Ind}_{H_k}^G \Big( \sum_{g\in\mathrm{Gal}(K/\Q)} g.\varphi_k \Big) \;.
\end{equation}
Note that $H_k$ is cyclic while $G$ is not,
by our hypothesis \eqref{eqh-pf-lem1},
we have
\begin{equation}
\label{eq4-pf-lem1}
\sum_{g\in\mathrm{Gal}(K/\Q)} g.\varphi_k \in R_\mathrm{rat}(H_k) = \mathcal{S}(H_k) \;.
\end{equation}
From \eqref{eq3-pf-lem1}, \eqref{eq4-pf-lem1} and Claim 1,
we obtain $\rho \in \mathcal{S}(G)$.
Hence $R_\mathrm{rat}(G) \subseteq \mathcal{S}(G)$.

\vspace{1mm}

\noindent Case 2. The group $G$ is cyclic, and there exist non trivial cyclic groups $G'$ and $G''$ such that $G = G' \times G''$.

By our hypothesis \eqref{eqh-pf-lem1} and Claim 3,
we have
\begin{equation}
R_\mathrm{rat}(G) = R_\mathrm{rat}(G') \otimes R_\mathrm{rat}(G'') = \mathcal{S}(G') \otimes \mathcal{S}(G'') \subseteq \mathcal{S}(G) \;.
\end{equation}

\vspace{1mm}

\noindent Case 3. The group $G$ is cyclic, and $|G| = p^r$ where $p$ is a prime number.

Let $a\in G$ be a generator of $G$.
For $k=0,\cdots,p^r-1$,
let $\varphi_k$ be the one dimensional representation defined by $\varphi_k(a) = \exp\big( 2k\pi i/p^r \big)$.
Then $\big(\varphi_k\big)_{k=0,\cdots,p^r-1}$ is a basis of $R(G)$.
Thus $R_\mathrm{rat}(G)$ is spanned by
\begin{equation}
\label{eq30-pf-lem1}
\Big(\sum_{g\in\mathrm{Gal}(K/\Q)} g.\varphi_k \Big)_{k = 0,\cdots,p^r-1} \;.
\end{equation}
If $k$ is a multiple of $p$,
then $\varphi_k: G \rightarrow \C^*$ is not injective.
There exists a surjective group homomorphism $f: G \rightarrow G'$ with $|G'|<|G|$ and $\varphi_k' \in R(G')$ such that $\varphi_k = f^*\varphi_k'$.
We have
\begin{equation}
\sum_{g\in\mathrm{Gal}(K/\Q)} g.\varphi_k = f^* \Big( \sum_{g\in\mathrm{Gal}(K/\Q)} g.\varphi_k' \Big) \in f^*R_\mathrm{rat}(G') \;.
\end{equation}
Then,
by our hypothesis \eqref{eqh-pf-lem1} and Claim 2,
we have
\begin{equation}
\sum_{g\in\mathrm{Gal}(K/\Q)} g.\varphi_k \in f^*\mathcal{S}(G') \subseteq \mathcal{S}(G) \;.
\end{equation}
If $k$ is not a multiple of $p$,
we can directly verify that
\begin{align}
\begin{split}
\frac{1}{[K:\Q]} \sum_{g\in\mathrm{Gal}(K/\Q)} g.\varphi_k
& = \frac{1}{p^{r-1}(p-1)} \Big( \sum_{k=0}^{p^r-1} \varphi_k - \sum_{k=0}^{p^{r-1}-1} \varphi_{kp} \Big) \\
& = \frac{1}{p^{r-1}(p-1)} \Big( \mathrm{Ind}^G_{\{e\}}\mathbb{1} - \mathrm{Ind}^G_{\langle a^{p^{r-1}}\rangle} \mathbb{1} \Big) \in \mathcal{S}(G) \;,
\end{split}
\end{align}
where $\{e\}$ is the trivial subgroup and $\langle a^{p^{r-1}} \rangle$ is the subgroup generated by $a^{p^{r-1}}$.
In conclusion,
each element in \eqref{eq30-pf-lem1} lies in $\mathcal{S}(G)$.
Hence $R_\mathrm{rat}(G) \subseteq \mathcal{S}(G)$.
\end{proof}

The key ingredient in the proof of Lemma \ref{lem1} is Artin's induction theorem,
which is also used in \cite[\textsection 5]{ohrt}.

\section{Proof of the main result}

Now we assume that there is a surjective group homomorphism $\mu: \pi_1(M) \rightarrow G$.
Then any linear representation of $G$ may be viewed a representation of $\pi_1(M)$.
Since
\begin{equation}
\tau^\mathrm{an/top}(M/S,\rho\oplus\rho') = \tau^\mathrm{an/top}(M/S,\rho) + \tau^\mathrm{an/top}(M/S,\rho')
\end{equation}
for $\rho,\rho'$ linear representations of $G$,
we have a $\Q$-linear map
\begin{align}
\label{eq-delta}
\begin{split}
\delta: R(G) & \rightarrow H^{\mathrm{even} \geqslant 2}(S) \\
\rho & \mapsto \tau^\mathrm{an}(M/S,\rho) - \tau^\mathrm{top}(M/S,\rho) \;.
\end{split}
\end{align}

\begin{lemme}
\label{lem2}
For any subgroup $H \leq G$,
we have $\mathrm{Ind}^G_H\mathbb{1} \in \ker \delta$.
\end{lemme}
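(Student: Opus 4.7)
The plan is to realize $F_{\mathrm{Ind}^G_H\mathbb{1}}$ as the pushforward of the trivial flat line bundle along a suitable finite covering of $M$, and then invoke a transfer (or induction) property for both higher torsions to reduce the statement to the trivial flat line bundle case, which is exactly the content of \cite{pzz3}.

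To set up the covering, I would let $\widetilde{H} = \mu^{-1}(H) \leq \pi_1(M)$. Since $G$ is finite, $\widetilde{H}$ has finite index, and so it determines a finite connected covering $\pi: \widehat{M} \rightarrow M$. Using the fact that $M \rightarrow S$ is locally trivial and that small enough neighborhoods in $S$ are contractible, the composition $\widehat{M} \rightarrow M \rightarrow S$ is again a smooth fibration whose fiber $\widehat{Z}$ is a finite cover of $Z$; since $\pi$ is finite and $Z$ is closed and oriented, $\widehat{Z}$ inherits the same properties. Moreover, $S$ still has finite fundamental group by hypothesis, so the triple $(\widehat{M}\to S, \underline{\C}_{\widehat{M}})$ satisfies all the assumptions of Theorem \ref{thm-main}. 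The standard identification of induced representations with covering spaces gives a canonical isomorphism of flat vector bundles
\begin{equation}
F_{\mathrm{Ind}^G_H \mathbb{1}} \; \cong \; \pi_* \underline{\C}_{\widehat{M}} \;,
\end{equation}
and in particular $\mathrm{rk}\,F_{\mathrm{Ind}^G_H\mathbb{1}} = [G:H]$, the degree of $\pi$.

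The heart of the argument is the transfer property, which I would invoke in the form
\begin{equation}
\tau^\mathrm{BL}(M/S, \pi_*\underline{\C}_{\widehat{M}}) = \tau^\mathrm{BL}(\widehat{M}/S, \underline{\C}) \;, \qquad
\tau^\mathrm{IK}(M/S, \pi_*\underline{\C}_{\widehat{M}}) = \tau^\mathrm{IK}(\widehat{M}/S, \underline{\C}) \;.
\end{equation}
For the Bismut--Lott torsion this is a direct consequence of the identification of the fiberwise de Rham complex for $\pi_*\underline{\C}_{\widehat{M}}$ with the fiberwise de Rham complex of $\widehat{M}\to S$ with trivial coefficients, together with the matching Hermitian metrics. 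For the Igusa--Klein torsion it is precisely the transfer axiom in Igusa's axiomatization \cite{ig2}. Combined with the compatibility of the correction term in \eqref{eq-normalize}, namely
\begin{equation}
\mathrm{rk}\,F_{\mathrm{Ind}^G_H\mathbb{1}} \int_Z \mathrm{e}(TZ)\mathrm{ch}(TZ) = [G:H]\int_Z \mathrm{e}(TZ)\mathrm{ch}(TZ) = \int_{\widehat{Z}} \mathrm{e}(T\widehat{Z})\mathrm{ch}(T\widehat{Z})\;,
\end{equation}
this yields $\tau^\mathrm{an}(M/S,\mathrm{Ind}^G_H\mathbb{1}) = \tau^\mathrm{an}(\widehat{M}/S,\underline{\C})$ and the analogous equality with $\tau^\mathrm{top}$.

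At this point the lemma follows by applying the main result of \cite{pzz3} to the fibration $\widehat{M}\to S$ with the trivial flat line bundle: $\tau^\mathrm{an}(\widehat{M}/S,\underline{\C}) = \tau^\mathrm{top}(\widehat{M}/S,\underline{\C})$, hence $\delta(\mathrm{Ind}^G_H\mathbb{1}) = 0$. The main obstacle is the verification of the transfer formulas: the analytic side is essentially a naturality statement for the fiberwise Laplacian, while the topological side must be extracted from the axiomatic framework of \cite{ig2,ohrt}. Once these are in hand, the rest is bookkeeping with the normalization constants in \eqref{eq-normalize}.
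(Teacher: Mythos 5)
Your proposal is correct and follows essentially the same route as the paper: the paper takes $M' = \widetilde{M}\times_\mu(G/H)$ (which is exactly your covering $\widehat{M} = \widetilde{M}/\mu^{-1}(H)$), identifies $\tau^{\mathrm{an/top}}(M/S,\mathrm{Ind}^G_H\mathbb{1})$ with $\tau^{\mathrm{an/top}}(M'/S,\mathbb{1})$ via the induction/transfer formulas of \cite{pzz3} and \cite{ohrt}, and concludes by \cite[Theorem 0.1]{pzz3}. The only difference is that you verify the normalization bookkeeping by hand where the paper outsources it to the cited induction formulas.
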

\begin{proof}
Let $M' = \widetilde{M} \times_\mu (G/H)$,
which is a finite covering of $M$.
By \cite[Remark 5.2]{ohrt} and the induction formula for Bismut-Lott torsion \cite[Theorem 2.4]{pzz3},
we have
\begin{equation}
\label{eq1-pf-lem2}
\tau^\mathrm{an}(M'/S,\mathbb{1}) = \tau^\mathrm{an}\big(M/S,\mathrm{Ind}^G_H\mathbb{1}\big) \;.
\end{equation}
By \cite[Remark 5.2]{ohrt} and the induction formula for Igusa-Klein torsion \cite[Definition 2.1, Theorem 3.1]{ohrt},
we have
\begin{equation}
\label{eq2-pf-lem2}
\tau^\mathrm{top}(M'/S,\mathbb{1}) = \tau^\mathrm{top}\big(M/S,\mathrm{Ind}^G_H\mathbb{1}\big) \;.
\end{equation}
On the other hand,
by the higher Cheeger-M{\"u}ller/Bismut-Zhang theorem for trivial flat line bundles \cite[Theorem 0.1]{pzz3},
we have
\begin{equation}
\label{eq3-pf-lem2}
\tau^\mathrm{an}(M'/S,\mathbb{1}) = \tau^\mathrm{top}(M'/S,\mathbb{1}) \;.
\end{equation}
From \eqref{eq-delta}-\eqref{eq3-pf-lem2},
we obtain $\delta\big(\mathrm{Ind}^G_H\mathbb{1}\big) = 0$.
\end{proof}

\begin{proof}[Proof of Theorem \ref{thm-main}]
Let $\rho: \pi_1(M) \rightarrow \mathrm{GL}_n(K)$ be as in Theorem \ref{thm-main}.
Denote its image by $G$, which is a finite group by assumption.
We may and we will view $\rho$ as a representation of $G$.
We obviously have
\begin{equation}
\label{eq1-pf}
\sum_{g\in\mathrm{Gal}(K/\Q)} g.\rho \in R_\mathrm{rat}(G) \;.
\end{equation}
On the other hand,
by Lemma \ref{lem1} and Lemma  \ref{lem2},
we have
\begin{equation}
\label{eq2-pf}
R_\mathrm{rat}(G) \subseteq \ker \delta \;.
\end{equation}
From \eqref{eq-delta}, \eqref{eq1-pf}, \eqref{eq2-pf} and the linearity of $\delta$,
we obtain \eqref{eq-thm-main}.
\end{proof}

\section{Further discussions}

Let $M\rightarrow S$ and $\rho: \pi_1(M) \rightarrow \mathrm{GL}_n(K)$ be as in Theorem \ref{thm-main}.

The following result was proved in \cite[\textsection 5]{ohrt}.

\begin{lemme}
\label{lem-ohrt}
If $\tau^\mathrm{an}(M/S,\rho) = \tau^\mathrm{top}(M/S,\rho)$ for any $M\rightarrow S$ and any one dimensional representation $\rho$,
then $\tau^\mathrm{an}(M/S,\rho) = \tau^\mathrm{top}(M/S,\rho)$ for any $M\rightarrow S$ and any $\rho$.
\end{lemme}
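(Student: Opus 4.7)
The plan is to imitate the proof of Theorem \ref{thm-main}, but to replace Artin's induction theorem by the stronger \emph{Brauer induction theorem}. Brauer's theorem asserts that every complex character of a finite group $G$ is a $\Z$-linear combination of characters of the form $\mathrm{Ind}_H^G \chi$, where $H \leqslant G$ is an elementary subgroup and $\chi: H \to \C^\times$ is a one-dimensional character. Because Brauer's formula already has integer coefficients, no Galois averaging of the kind used in \eqref{eq1-pf-lem1} is needed; every one-dimensional representation with finite image automatically has its values in the roots of unity, so the hypothesis of the lemma covers exactly the characters that appear on the right-hand side of Brauer's formula.

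Concretely, given $\rho: \pi_1(M) \to \mathrm{GL}_n(\C)$ with finite image $G$, I would view $\rho$ as an element of $R(G)$ through a surjection $\mu: \pi_1(M) \twoheadrightarrow G$ and introduce the $\Q$-linear map $\delta: R(G) \to H^{\mathrm{even} \geqslant 2}(S)$ of \eqref{eq-delta}. The key step is then to upgrade Lemma \ref{lem2} from the trivial representation $\mathbb{1}$ to an arbitrary one-dimensional character: for every subgroup $H \leqslant G$ and every $\chi: H \to \C^\times$, the covering $M' = \widetilde{M} \times_\mu (G/H)$ still fibers over $S$ with closed oriented fiber. The induction formulas \cite[Theorem 2.4]{pzz3} and \cite[Theorem 3.1]{ohrt} should identify $\tau^\mathrm{an/top}(M/S, \mathrm{Ind}_H^G \chi)$ with $\tau^\mathrm{an/top}(M'/S, F_\chi)$, where $F_\chi$ denotes the flat line bundle on $M'$ associated with $\chi$. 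Applying the hypothesis of the lemma to the pair $(M'/S, F_\chi)$ gives $\mathrm{Ind}_H^G \chi \in \ker \delta$. Expanding $\rho$ by Brauer's theorem as an integer combination of such induced characters and invoking linearity of $\delta$ yields $\delta(\rho) = 0$, which is the desired identity.

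The main obstacle I expect is the second step: the version of the induction formula invoked in the proof of Lemma \ref{lem2} was only used for the trivial representation, and here I need it for arbitrary one-dimensional characters $\chi$. This should be a routine extension, since inducing a representation of $\pi_1(M')$ to $\pi_1(M)$ corresponds geometrically to pushforward of the flat bundle $F_\chi$ along the finite covering $M' \to M$, and both Bismut--Lott and Igusa--Klein torsion invariants are designed to be compatible with such pushforwards. Once that compatibility is in place, the implication is a direct consequence of Brauer's theorem, and no new analytic input beyond the one-dimensional case is required.
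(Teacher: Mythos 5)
Your argument is correct, and it is worth noting that the paper does not actually prove this lemma: it is quoted from Ohrt's paper (\S 5 of \cite{ohrt}), so you are supplying a proof where the text only gives a citation. Your route via Brauer's induction theorem is sound: every character of the finite image $G$ is a $\Z$-linear combination of characters $\mathrm{Ind}_H^G\chi$ with $\chi$ one-dimensional, each such $\chi$ has finite image (roots of unity), the associated cover $M' = \widetilde{M}\times_\mu(G/H) \to S$ still satisfies the standing hypotheses ($\pi_1(S)$ finite, fiber closed and oriented, finite holonomy), and the hypothesis of the lemma applied to $(M'/S, F_\chi)$ together with the induction formulas puts $\mathrm{Ind}_H^G\chi$ in $\ker\delta$; linearity of $\delta$ finishes the argument. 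The remark following Lemma \ref{lem1} indicates that Ohrt's own proof rests on Artin's induction theorem instead; that also works, since every character of a cyclic group is a sum of one-dimensional characters, and the resulting rational (rather than integral) coefficients are harmless because $\delta$ is $\Q$-linear. So the two routes differ only in which induction theorem is invoked; Brauer is the more natural choice here because the hypothesis already grants you all one-dimensional characters, whereas Artin is the sharper tool for the paper's main theorem, where only the trivial character is available and the Galois averaging of \eqref{eq1-pf-lem1} is needed to compensate. The one step you rightly flag --- the induction formula $\tau^{\mathrm{an/top}}(M/S,\mathrm{Ind}_H^G\chi)=\tau^{\mathrm{an/top}}(M'/S,F_\chi)$ for a nontrivial $\chi$ --- is indeed the only input beyond Lemma \ref{lem2}; the cited statements (\cite[Theorem 2.4]{pzz3} for Bismut--Lott and \cite[Theorem 3.1]{ohrt} for Igusa--Klein) are formulated for general flat bundles on the cover, not just the trivial one, and the rank factor $[G:H]$ makes the $\zeta'(-k)$ correction terms in \eqref{eq-normalize} match on both sides exactly as in the proof of Lemma \ref{lem2}.
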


Now we give an observation,
which is a direct consequence of Theorem \ref{thm-main} and Lemma \ref{lem-ohrt}.

\begin{cor}
If $\tau^\mathrm{an}(M/S,\rho - g.\rho) = \tau^\mathrm{top}(M/S,\rho - g.\rho)$ for any $M\rightarrow S$, any one dimensional representation $\rho$ and any $g\in\mathrm{Gal}(K/\Q)$,
then $\tau^\mathrm{an}(M/S,\rho) = \tau^\mathrm{top}(M/S,\rho)$ for any $M\rightarrow S$ and any $\rho$.
\end{cor}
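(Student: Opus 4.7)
The plan is to combine the two ingredients the statement advertises: Lemma \ref{lem-ohrt} reduces matters to one-dimensional representations, and Theorem \ref{thm-main} supplies an averaged identity over a Galois orbit which, together with the hypothesis, forces the difference $\delta(\rho) = \tau^\mathrm{an}(M/S,\rho) - \tau^\mathrm{top}(M/S,\rho)$ to vanish.

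By Lemma \ref{lem-ohrt}, it suffices to verify $\tau^\mathrm{an}(M/S,\rho) = \tau^\mathrm{top}(M/S,\rho)$ for any fibration $M \rightarrow S$ and any one-dimensional representation $\rho$ with finite holonomy. For such a $\rho$, the image is a finite subgroup of $\C^*$, hence cyclic and generated by a root of unity; consequently $\rho$ factors through $\mathrm{GL}_1(K)$ for some cyclotomic extension $K/\Q$, which is finite and Galois. Theorem \ref{thm-main} therefore applies, and in terms of the $\Q$-linear map $\delta$ from \eqref{eq-delta} it yields
\[
\sum_{g \in \mathrm{Gal}(K/\Q)} \delta(g.\rho) = 0.
\]

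On the other hand, the hypothesis of the corollary is exactly that $\delta(\rho - g.\rho) = 0$ for every $g \in \mathrm{Gal}(K/\Q)$; by $\Q$-linearity of $\delta$, this reads $\delta(g.\rho) = \delta(\rho)$ for all such $g$. Substituting into the displayed sum gives $[K:\Q] \, \delta(\rho) = 0$, hence $\delta(\rho) = 0$, as required.

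I do not expect any real obstacle: the only mild technical remark is the observation that a one-dimensional representation with finite image is automatically valued in $\mathrm{GL}_1$ of a finite Galois extension of $\Q$ (namely a cyclotomic field), which is what permits the invocation of Theorem \ref{thm-main} at the reduction step. The remainder of the argument is pure $\Q$-linearity combined with the two cited results.
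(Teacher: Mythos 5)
Your proposal is correct and follows essentially the same route as the paper: use the hypothesis and $\Q$-linearity of $\delta$ to get $\delta(g.\rho)=\delta(\rho)$, average over $\mathrm{Gal}(K/\Q)$ via Theorem \ref{thm-main} to conclude $\delta(\rho)=0$ for one-dimensional $\rho$, then invoke Lemma \ref{lem-ohrt}. Your extra remark that a one-dimensional representation with finite image lands in a cyclotomic field is a harmless addition (the paper simply fixes $K$ as in Theorem \ref{thm-main} from the outset).
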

\begin{proof}
By our assumption,
if $\rho$ is one dimensional,
we have
\begin{equation}
\label{eq1-pf-prop}
\tau^\mathrm{an}(M/S,\rho) - \tau^\mathrm{top}(M/S,\rho) =  \tau^\mathrm{an}(M/S,g.\rho) - \tau^\mathrm{top}(M/S,g.\rho) \hspace{4mm}
\end{equation}
for any $g\in\mathrm{Gal}(K/\Q)$.
By Theorem \ref{thm-main} and \eqref{eq1-pf-prop},
we have
\begin{align}
\begin{split}
& \tau^\mathrm{an}(M/S,\rho) - \tau^\mathrm{top}(M/S,\rho) \\
& = \frac{1}{[K:\Q]} \sum_{g\in\mathrm{Gal}(K/\Q)} \Big( \tau^\mathrm{an}(M/S,g.\rho) - \tau^\mathrm{top}(M/S,g.\rho) \Big) = 0 \;.
\end{split}
\end{align}
Hence $\tau^\mathrm{an}(M/S,\rho) = \tau^\mathrm{top}(M/S,\rho)$ for any $M\rightarrow S$ and any one dimensional $\rho$.
Now,
applying Lemma \ref{lem-ohrt},
we get $\tau^\mathrm{an}(M/S,\rho) = \tau^\mathrm{top}(M/S,\rho)$ for any $M\rightarrow S$ and any $\rho$.
\end{proof}

\bibliographystyle{amsplain}
\bibliography{ArtinInd}

\end{document}